\documentclass[12pt,a4paper]{amsart}
\usepackage{amsmath,amscd}
\usepackage{latexsym,amsmath,amssymb,mathrsfs,setspace,enumerate,tikz}
\usepackage[T1]{fontenc}
\usepackage[utf8]{inputenc}
\usepackage{lmodern}
\usepackage{amssymb}
\usepackage{tikz}
\usetikzlibrary{automata,positioning}

\newtheorem{thm}{Theorem}

\newtheorem{exam}{Example}

\newtheorem{dfn}{Definition}

\title{\textbf{GRAPH INVERSE SEMIGROUPS AND THEIR SUBSTRUCTURES}}
\author {P. G. Romeo$^1$ and Alanka Thomas$^2$}
\address{Dept. of Mathematics, Cochin University of Science and Technology, Kochi, Kerala, INDIA.}
\email{$romeo_-parackal@yahoo.com,\, alankathomas1@gmail.com $}
\subjclass[2010]{06E05, 06D05, 06E25, 20M10}
\keywords{Directed graphs, Inverse semigroups , Idempotents, Local submonoid, Green's relations.}
\begin{document}

\begin{abstract}
In this paper we discuss graph inverse semigroups $I(\Gamma)$ which are constucted from a directed graph $\Gamma$ and study several interesting properties of graph inverse semigroups such as the nature of its idempotents, the structure of semilattice of idempotents and the like. A necessary and sufficient condition on a directed graph $\Gamma$ to have the corresponding $I(\Gamma)$ a primitive inverse semigroup is provided. Further the general form of elements in a local submonoids of $I(\Gamma)$ and the Green's relations in graph inverse semigroups in graph theoretic terms are also described.

\end{abstract}
\maketitle
\section{Introduction}
Graph inverse semigroups are inverse semigroups constructed from a directed graph. 
Given a directed graph $\Gamma$ the graph inverse semigroup, which we dente by $I(\Gamma)$ is an inverse semigroup and being an inverse semigroup the set of idempotents $E(I(\Gamma))$ form a semilattice under natural partial ordering. Structure of the semilattice $E(I(\Gamma))$ of few digraphs are discussed in section $3$. $I(\Gamma)$ is primitive if and only if corresponding $\Gamma$ is totally disconnected or trivial. General form of an element in a local submonoid at some idempotents in $I(\Gamma)$ is provided. 
and Theorem $(4)$ gives a necessary and sufficient condition on $\Gamma$ to have local submonoid at each vertex to be trivial. In subsection $(3.3)$ we discuss Green's relations in graph inverse semigroups. 

\section{Preliminaries}

A  set $S$ equipped  with an associative binary operation $*$ is a semigroup and is denoted by $(S,*)$. An element $e\,\in\, S$ is an idempotent if $e^2\,=\,e$, the set of all idempotents in $S$ is denoted by 
$E(S)$. An element $a\,\in \,S$ is regular if there exists $b\,\in\,S$ such that $aba\,=a$. If each element of $S$ is regular, then $S$ is called a regular semigroup. If $a\,\in\,S$ is regular, we can find $a'\,\in \, S$ such that $aa'a\,=\,a$ and $a'aa'\,=\,a'$, such an $a'$ is called an inverse of $a$. Note that an 
element in a regular semigroup can have more than one inverse. If $a'$ is an inverse of $a\,\in\,S$, then $aa'$ and $a'a$ are idempotents in $S$.

\begin{dfn}
An inverse semigroup is a regular semigroup, in which every element $a$ has a unique inverse.
\end{dfn}
\begin{thm}
Let $S$ be a regular semigroup. Each element in $S$ has a unique inverse if and only if idempotents is $S$ commutes. ie., an inverse semigroup is a regular semigroup in which idempotents commutes.
\end{thm}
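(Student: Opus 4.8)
The plan is to prove the biconditional in two directions, treating the commutativity of idempotents as the structural condition equivalent to uniqueness of inverses, and to lean throughout on the preliminary observation that $aa'$ and $a'a$ lie in $E(S)$ whenever $a'$ is an inverse of $a$.

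For the direction that commuting idempotents force uniqueness, I would start with a regular element $a$ and suppose $b$ and $c$ are both inverses, so $aba=a$, $bab=b$, $aca=a$, $cac=c$. The four products $ab, ba, ac, ca$ then all lie in $E(S)$. The key is a single chain of rewrites: substituting $a=aca$ into $b=bab$ gives $b=b(ac)(ab)$, and since $ac,ab\in E(S)$ commute this collapses to $b=bac$. A symmetric computation starting from $c=cac$ and substituting $a=aba$ yields $c=bac$, whence $b=c$. This direction is essentially bookkeeping once the commuting idempotents are identified.

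The converse is the harder direction. Assuming every element has a unique inverse, I must show $ef=fe$ for arbitrary $e,f\in E(S)$. First I would let $x$ denote the unique inverse of $ef$ and consider the auxiliary element $g=fxe$. A direct check shows $(ef)g(ef)=ef$ and $g(ef)g=g$, so $g$ is also an inverse of $ef$; uniqueness then forces $x=fxe$. From this identity I can derive $fx=x=xe$, and therefore $x^2=(xe)(fx)=x(ef)x=x$, so that $x$ is idempotent. Since an idempotent is its own unique inverse, and $ef$ is an inverse of $x$, applying uniqueness to $x$ gives $ef=x$; hence the product $ef$ of any two idempotents is again idempotent.

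With closure of $E(S)$ under multiplication in hand, the final step verifies that $ef$ and $fe$ are mutually inverse: expanding $(ef)(fe)(ef)$ and $(fe)(ef)(fe)$ and collapsing repeated factors via $e^2=e$ and $f^2=f$ shows they reduce to $ef$ and $fe$ respectively. Since $ef$ is idempotent its unique inverse is itself, yet $fe$ is also an inverse of $ef$, so uniqueness gives $ef=fe$. I expect the main obstacle to be the converse's first step, namely discovering the element $g=fxe$ and pushing the uniqueness argument through to conclude that the inverse of $ef$ is idempotent; everything afterward is routine manipulation of idempotents.
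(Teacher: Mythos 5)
The paper states this theorem as a preliminary fact and gives no proof of its own (it is the classical characterization of inverse semigroups due to Vagner and Preston, found e.g.\ in Howie or Lawson), so there is nothing in the source to compare against line by line. Your argument is correct and complete, and it is essentially the standard textbook proof. The easy direction is right: with $b,c$ both inverses of $a$, the products $ab,ba,ac,ca$ are idempotent, and the rewrites $b=b(ac)(ab)=b(ab)(ac)=bac$ and $c=(ca)(ba)c=(ba)(ca)c=b(ac)(ac)=bac$ do give $b=c$. In the converse, the auxiliary inverse $g=fxe$ of $ef$ is exactly the right device: uniqueness forces $x=fxe$, hence $fx=x=xe$, hence $x^2=x(ef)x=x$, and then applying uniqueness of inverses to the idempotent $x$ (which is its own inverse, while $ef$ is also an inverse of $x$) yields $ef=x\in E(S)$; closure of $E(S)$ under products plus the mutual-inverse computation for $ef$ and $fe$ finishes it. The only points worth making explicit in a written version are that $ef$ is regular because $S$ is regular (so it has an inverse to begin with) and that ``being an inverse of'' is a symmetric relation, which is what lets you turn ``$x$ is the inverse of $ef$'' into ``$ef$ is an inverse of $x$.''
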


\begin{dfn} An ordering $\leq$ defined by $a \, \leq \, b\Leftrightarrow\, \text{there exists} \,\,e,f\,\in \,
E(S)$ such that $a\,=be\,=fb$, called the natural partial ordering on $S$. If $S$ is an inverse semigroup, $\leq$ when restricted to $E(S)$ form a semilattice.
\end{dfn}

\begin{dfn}
Let $S$ be a semigroup,binary relations $\mathscr{L},\mathscr{R},\mathscr{H},\mathscr{D},\mathscr{J}$ are called Green's relations and are defined as follow.
\begin{enumerate}
\item $a\,\mathscr{L}\,b$ if and only if $S^1a=S^1b$, that is, there exists  $s,\,s'\,\in S^1$ such that $sa=b$ and $s'b=a$.
\item $a\,\mathscr{R}\,b$ if and only if $aS^1=bS^1$, that is, there exixtx $s,\,s'\,\in S^1$ such that $as=b$ and $bs'=a$.
\item $a\,\mathscr{J}\,b$ if and only if $S^1aS^1=S^1bS^1$, that is, there exixtx $s,\,s',\,t,\,t'\in S^1$ such that $sat=b$ and $s'bt'=a$.
\item $\mathscr{D}\,=\,\mathscr{L}\circ\mathscr{R}\,=\,\mathscr{R}\circ\mathscr{L}$
\item $\mathscr{H}\,=\,\mathscr{L}\cap\mathscr{R}$.
\end{enumerate}
\end{dfn}
The theorem below follows easily.
\begin{thm}
Let $S$ be an inverse semigroup with semilattice of idempotents $E$,then
\begin{enumerate}
\item $a\,\mathscr{R}\,b\,$ if and only if $ \, aa^{-1}\,=\,bb^{-1}$.
\item $a\,\mathscr{L}\,b\,$ if and only if $ \, a^{-1}a\,=\,b^{-1}b$.
\item $a\,\mathscr{H}\,b\,$ if and only if $ \, a^{-1}a\,=\,b^{-1}b$ and $aa^{-1}\,=\,bb^{-1}$.
\end{enumerate}
\end{thm}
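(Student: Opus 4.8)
The plan is to prove each part by working directly with the defining equations of the Green's relations and exploiting two basic features of an inverse semigroup: the law $(xy)^{-1}=y^{-1}x^{-1}$ and, above all, the commutativity of idempotents guaranteed by Theorem 1. Since $\mathscr{H}=\mathscr{L}\cap\mathscr{R}$ by definition, part (3) will be an immediate consequence of parts (1) and (2), and part (2) is the exact left--right dual of part (1); so the whole argument reduces to establishing (1).

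For the forward direction of (1), I would start from $a\,\mathscr{R}\,b$, which unwinds to $a=bu$ and $b=av$ for some $u,v\in S^1$. Using $aa^{-1}a=a$ together with $b=av$ gives $aa^{-1}b=aa^{-1}av=av=b$, and symmetrically $bb^{-1}a=a$. Right-multiplying the first identity by $b^{-1}$ yields $aa^{-1}bb^{-1}=bb^{-1}$, while right-multiplying the second by $a^{-1}$ yields $bb^{-1}aa^{-1}=aa^{-1}$. The key step is now to invoke commutativity of the idempotents $aa^{-1}$ and $bb^{-1}$: the two left-hand sides are then equal, so $aa^{-1}=bb^{-1}$.

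For the converse, assume $aa^{-1}=bb^{-1}$. Then $b=bb^{-1}b=aa^{-1}b=a(a^{-1}b)$ exhibits $b\in aS^1$, and dually $a=aa^{-1}a=bb^{-1}a=b(b^{-1}a)$ exhibits $a\in bS^1$; hence $aS^1=bS^1$, i.e.\ $a\,\mathscr{R}\,b$. Part (2) follows by replacing right ideals and $aa^{-1}$ throughout with left ideals and $a^{-1}a$, and part (3) by intersecting the two characterizations.

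I expect no serious obstacle here; the only point that genuinely uses the inverse-semigroup hypothesis rather than mere regularity is the commutativity of idempotents in the forward direction of (1), and pinning down exactly where that commutativity is needed is the one thing worth stating carefully.
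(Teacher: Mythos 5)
Your proof is correct, and it is the standard argument: the paper itself offers no proof of this theorem (it is stated with only the remark that it ``follows easily''), so there is nothing to compare against. Each step checks out --- in particular the forward direction of (1), where you derive $aa^{-1}bb^{-1}=bb^{-1}$ and $bb^{-1}aa^{-1}=aa^{-1}$ and then use commutativity of idempotents to conclude $aa^{-1}=bb^{-1}$ --- and the reductions of (2) to the left--right dual and of (3) to $\mathscr{H}=\mathscr{L}\cap\mathscr{R}$ are exactly right.
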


 Next we recall some basic definitions and results from graph theory needed in the sequel.

A directed graph $\Gamma$ consists of a non-empty set of vertices $\Gamma^0$, a possibly empty set of edges $\Gamma^1$, source map $s\,:\,\Gamma^1\rightarrow\Gamma^0$ which maps each edge $e$ to its source $s(e)$ and range map $r\,:\,\Gamma^1\rightarrow\Gamma^0$ which maps each edge $e$ to its range $r(e)$. An edge $e$ is denoted as an arrow $s(e)\xrightarrow{e} r(e)$. If $s(e)= r(e)$ then $e$ is called a loop. 
$\mid s^{-1}(v)\mid$ gives the number of edges starting from $v$, which is the outdegree of $v$. Similarly 
$\mid r^{-1}(v)\mid$ gives the indegree of $v$. A vertex having zero outdegree is called a sink and a vertex with zero indegree is called a source.
\begin{dfn}
A directed path in $\Gamma$ is a finite seguence $p=e_1e_2...e_n$ where $e_i\,\in\,\Gamma^1$ with $r(e_i)=s(e_{i+1})$ forall $i=1,2,...{n-1}$. Note that $s(p)=s(e_1)$ and $r(p)= r(e_n)$ and we say $p$ is a directed path from $s(p)$ to $r(p)$.
\end{dfn}

A vertex $v$ may also regarded as a directed path (rivial path) with $s(v)=r(v)=v$r. For each $e$ there is an inverse edge $e^*$ with $s(e^*)=r(e)$ and $r(e^*)=s(e)$. These edges are also called ghost edges. $(\Gamma^1)^*$ denote the set of all ghost edges of $\Gamma$. If $p=e_1e_2...e_n$ is a directed path in $\Gamma$, its inverse path is defined as $p^*={e_n}^*{e_{n-1}}^*...{e_1}^*$.
\begin{dfn}
A directed graph $\Gamma$ is said to be connected if forall vertices $u$ and $v$ there exists atleast one directed path from $u$ to $v$ or $v$ to $u$. Two vertices $u$ and $v$ are said to be strongly connected if there exist directed paths from $u$ to $v$ and $v$ to $u$. 
\end{dfn}

Clearly strongly connectedness is an equivalence relation on $\Gamma^0$  and this describes the digraph into strongly connected components.

\section{Graph inverse semigroups}
 Next we describe an inverse semigroup which is generated by a directed  which we call the graph inverse semigroup and discuses  various properties of garph inverse semigroups.
\begin{dfn}(cf.\cite{meakin}) Let $\Gamma$ be a directed graph. The graph inverse semigroup denoted by $I (\Gamma)$ is a semigroup generated by $W\,=\,\Gamma^0\cup \Gamma^1\cup {\Gamma^1}^*$ subject to the relations,
 \begin{enumerate}
 \item $s(e)e\,= e\,=\, er(e)\quad \forall \,e\, \in \Gamma^0\cup \Gamma^1\cup {\Gamma^1}^*$
 \item $uv\, =\, \delta_{uv}\,u \quad \forall \, u,\,v\,\in \Gamma^0$
 \item $e^* f\, = \delta_{ef}\, r(e)\quad \forall\, e,\, f\, \in \Gamma^1 $
 \end{enumerate}
 \end{dfn}
It is easy to see that $I(\Gamma)$ is an inverse semigroup (cf.\cite{alan}). $\Gamma$ be a directed graph and $p,\,q$ be directed paths in $\Gamma$ with same range, that is $r(p)\,=\,r(q)$, these paths provides an ordered pair $(p,q)$. Let $T$ be the set of all ordered pairs $(p,q)$ where $p$ and $q$ are directed paths with $r(p)\, =\, r(q)$ along with zero. Define a binary operation $*$ on $T$ by,\\
\[ (p_1,q_1)\,*\,(p_2,q_2)\, = \left\{ \begin{array}{ll}
         (p_1t,q_2) & \mbox{if $p_2\,=\,q_t\,for\,some \,directed \,path \,t$};\\
        (p_1,q_2t) & \mbox{if $q_1\,=p_2t\,for\,some\,directed\,path\,t$};\\
        0 & \mbox{otherwise}.
        \end{array} \right. \] 
It is easy to observe that $*$ is a closed binary operation. For $(p_1,q_1),\,(p_2,q_2)$ and $(p_3,q_3)$ in $T$ such that $q_1$ or $p_2$ are not initial component of the other or $q_2$ or $p_3$ are not initial component of the other then $[(p_1,q_1)\,*\,(p_2,q_2)]\,*\,(p_3,q_3)$ and $(p_1,q_1)\,*\,[(p_2,q_2)\,*\,(p_3,q_3)]$ are $0$. On the otherhand, for 
$q_1\,=\,p_2t_1$ and $q_2\,=\, p_3t_2$, then\\
\[ \begin{array}{lcl}
\mbox{$[(p_1,q_1)\,*\,(p_2,q_2)]\,*\,(p_3,q_3)$} & = & [(p_1,p_2t_1)\,*\,(p_2,p_3t_2)]\,*\,(p_3,q_3) \\
\mbox{} & = & (p_1,p_3t_2t_1)\,*\,(p_3,q_3) \\
\mbox{} & = & (p_1,q_3t_2t_1)
\end{array}\]
and
\[ \begin{array}{lcl}
\mbox{$(p_1,q_1)\,*\,[(p_2,q_2)\,*\,(p_3,q_3)]$} & = & (p_1,p_2t_1)\,*\,[(p_2,p_3t_2)\,*\,(p_3,q_3)] \\
\mbox{} & = & (p_1,p_2t_1)\,*\,(p_2,q_3t_2) \\
\mbox{} & = & (p_1,q_3t_2t_1)
\end{array}\]
ie.,  $*$ is associative. Similarly prove associativity in other cases as well.  Thus $(T,*)$ is a semigroup with zero. Let $(p,q)\,\in \, T$ and $(p',q')$ be its inverse, ie., 
$$(p,q)*(p',q')*(p,q)\,=\,(p,q)\,\text{ and}\,(p',q')*(p,q)*(p',q')\,=\,(p',q')$$
since both products above are nonzero there exists directed paths $t_1$ and $t_2$ such that $p'\,=\,qt_1$ and $q'\,=\,pt_2$ and so  
\[
\begin{array}{lll}
\mbox{} & (p,q)*(p',q')*(p,q)\,& =\,(p,q)\\
\mbox{} & (p,q)(qt_1,pt_2)(p,q) & =\,(p,q) \\
\mbox{} & (pt_1,qt_2) & =\,(p,q)
\end{array}
\]
which implies that $t_1$ and $t_2$ are trivial paths. Hence $(q,p)$ is the unique inverse of $(p,q)$ 
ie., $T$ is an inverse semigroup.

\par Define a map $\alpha\, :\,W\,\rightarrow\,T$ as follows,
\[ \begin{array}{lcl}
\mbox{$\alpha(v)$} & = & (v,v) \\
\mbox{$\alpha(e)$} & = & (e,r(e)) \\
\mbox{$\alpha(e^*)$} & = & (r(e),e)\\
\mbox{$\alpha(0)$} & = & 0
\end{array}\]
For the free semigroup $F_W$ generated by $W$, by the characteristic property of free semigroup it is easy to see that, $\alpha$ can be extended to a unique homomorphism $\bar{\alpha}\,:\,F_W\rightarrow \, T$. Let $(p,q)$ be a non-zero element in $T$ where $p\,=\,a_1a_2...a_n$ , $q\,=\,b_1b_2...b_m$ and $a_i,\,b_j\, \in \Gamma^1 \, \forall \, i,\,j$, then,
\[ \begin{array}{lcl}
\mbox{$\bar{\alpha}(pq^*)$} & = & \bar{\alpha}(a_1a_2...a_n{b_m}^*{b_{m-1}}^*...{b_1}^*) \\
\mbox{} & = & \bar{\alpha}(a_1)*\bar{\alpha}(a_2)*...*\bar{\alpha}(a_n)*\bar{\alpha}({b_{m}}^*)*\bar{\alpha}({b_{m-1}}^*)*...*\bar{\alpha}({b_1}^*)\\
\mbox{} & = & (a_1,r(a_1))*...*(a_n,r(a_n))*(r(b_m),b_m)*...*(r(b_1),b_1)\\
\mbox{} & = & (a_1a_2...a_n\,,\,b_1b_2...b_m)\\
\mbox{} & = & (p,q)
\end{array}\]
ie., each element in $T$ has a preimage under $\bar{\alpha}$ which implies $\bar{\alpha}$ is onto. 

\par Let $\rho$ be the congruence on $F_W$ generated by the three conditions used to define a graph inverse semigroup, then the $\rho$-classes in $F_W$ have the same image  
under $\bar{\alpha}$. For, it will suffices to show that $\bar{\alpha}(W)$ satisfy the following 

\begin{enumerate}
\item \[\begin{array}{lcl}
\mbox{$\bar{\alpha}(u)\bar{\alpha}(v)$} & = & (u,u)*(v,v)\\
\mbox{} & = & {\left\{ \begin{array}{ll}
         (u,u) & \mbox{if $u=v$};\\
          0 & \mbox{Otherwise}
          \end{array} \right. }\\
\mbox{} & = & {\delta_{uv}(u,u)}\\
\mbox{} & = & \delta_{\bar{\alpha}(u)\bar{\alpha}(v)}\bar{\alpha}(u)\quad\forall\, u,\,v\, \in \Gamma^0
\end{array} 
\]
\item \[\begin{array}{lcl}
\mbox{$\bar{\alpha}(e^*)\bar{\alpha}(f)$} & = & (r(e),e)*(f,r(f))\\
\mbox{} & = & {\left\{ \begin{array}{ll}
         (r(e),r(e)) & \mbox{if $e=f$};\\
          0 & \mbox{Otherwise}
          \end{array} \right. }\\
\mbox{} & = & {\left\{ \begin{array}{ll}
         \bar{\alpha}(r(e)) & \mbox{if $e=f$};\\
          0 & \mbox{Otherwise}
          \end{array} \right. }\\
\mbox{} & = & \delta_{\bar{\alpha}(e)\bar{\alpha}(f)}\bar{\alpha}(r(e))\quad\forall\, e,\,f\, \in \Gamma^1
\end{array} 
\]
\item \[\begin{array}{lcl}
\mbox{$\bar{\alpha}(s(e))\bar{\alpha}(e)$} & = & (s(e),s(e))*(e,r(e))\\
\mbox{} & = & (s(e)e,r(e)) \\
\mbox{} & = & (e,r(e))\\
\mbox{} & = &  \bar{\alpha}(e)\quad\forall\, e\,\in \Gamma^0\\
\mbox{$\bar{\alpha}(s(e))\bar{\alpha}(e)$} & = & (e,r(e))(r(e),r(e))\\
\mbox{} & = & (e, r(e))\\
\mbox{} & = & \bar{\alpha}(e)
\end{array}
\]

ie., $\bar{\alpha}(s(e))\bar{\alpha}(e)\,=\,\bar{\alpha}(s(e))\bar{\alpha}(e)\,=\,\bar{\alpha}(e)\quad \forall\, e\, \in \,\Gamma^0.$
\end{enumerate}
Thus $\rho$ related elements have same image under $\bar{\alpha}$. Define a map $\phi\,:I(\Gamma)\, =\,F_W/\rho \rightarrow T$ by $\phi([t])\,=\,\bar{\alpha}(t)$, where $[t]$ is the $\rho$- congruence class containing $t$. Clearly $\phi$ is a surjective homomorphism. For $\phi([t])\, = \, \phi([s])$, we can find $pq^*\,\in\,[s]$ and $xy^*\,\in\,[t]$. Then 
$$\phi([pq^*])\,=\,\phi([s])\,=\,\phi([t])\,=\,\phi([xy^*])$$ which implies $\bar{\alpha}(pq^*)\, = \,\bar{\alpha}(xy^*)$. ie., $(p,q) \, = \, (x,y)$ and so $p = q$ and $r = s$. Hence $ pq^*\,= \, xy^*$. ie., 
$[s]\, =\, [t]$,and so $\phi$ is one-one. Therefore 
$\phi\, :\, I(\Gamma)\rightarrow T$ is an isomorphism and $I(\Gamma)$ is an inverse semigroup. Also, each element of $I(\Gamma)$ is of the form $pq^*$ for some directed paths $p$ and $q$ with $r(p)\, =\, r(q)$ and so each element of $I(\Gamma)$ are circutes   of the form $pq^*$ where $p$ and $q$ are directed paths in $\Gamma$ with same end points. Now we provide few examples of graph inverse semigroups.
\begin{exam}
Consider the directed graph $\Gamma$\\
\[
\begin{tikzpicture}[>=stealth,
   shorten >=1pt,
   node distance=2cm,
   on grid,
   auto,
   every state/.style={draw=black!60, fill=black!5, very thick}
  ]
\node (mid)        {u};
\node (right)[right=of mid] {v};
\path[->]
   (mid)  edge             node                      {e} (right)
   ;
\end{tikzpicture}
\]
then the graph inverse semigroup is $I(\Gamma)\,=\,\{u,\,v,\,e,\,e^*,\,ee^*,0$
\end{exam}

\begin{exam}
Consider the directed graph $\Gamma$ with single vertex and a loop on it.
\[
\begin{tikzpicture}
\node (mid)        {v};
\path[->]
   (mid)  edge [loop above]            node                      {e} (mid)  ;
\end{tikzpicture}
\]
the three conditions to define $I(\Gamma)$ can be rewritten as follows
\begin{enumerate}
\item $v^2\, =\, v$
\item $e^*e\,=\,v$
\item $ve\,=\,ev\,=\,e\,\text{and}\,\, ve^*\,=\,e^*v\,=\,e^*$
\end{enumerate}
ie., $v$ act as an identity in $I(\Gamma)$ and $I(\Gamma)$ is the monoid generated by the symbols $e,\,e^*$ subject to the condition that $e^*e\,=\, v$. Thus $I(\Gamma)\,=\,<e,\,e^*\mid e*e\, =\,v>$, is the bicyclic monoid generated by $e$ (cf.\cite{meakin}).
\end{exam}

If $\Gamma$ is a directed graph with a single vertex and n loops, then corresponding $I(\Gamma)$ is the polycyclic monoid with $n$ symbols.

\begin{exam}
Let $\Gamma$ be $P_3$, a directed graph with three vertices as follows,
\[\begin{tikzpicture}[>=stealth,
   shorten >=1pt,
   node distance=2cm,
   on grid,
   auto,
   every state/.style={draw=black!60, fill=black!5, very thick}
  ]
\node (mid) {v};
\node (right)[right= of mid]  {w};
\node (left)[left= of mid]{u};
\path[->]
(left) edge node {e} (mid)
(mid)edge node {f}(right);
\end{tikzpicture}
\]
\\ then\\
$I(\Gamma)=\{u,v,w,e,f,e^*,f^*,ee^*,ff^*,ef,(ef)^*,(ef)(ef)^*,eff^*,f(ef)^*,0\}$
\end{exam}

\subsection{Idempotents in $I(\Gamma)$ }
The element in $I(\Gamma)$ for any directed graph $\Gamma$ is of the form $pq^*$ where $p$ and $q$ are directed paths with same end point. 
If $q\,=\,pt$ for some directed path $t$, then $pq^*pq^*\,=\,p(pt)^*pq^*\,=\,p(qt)^*$ which implies that 
$q\,=\,qt$, ie., $t$ is a trivial path and so $p\,=\,q$
Thus nonzero idempotents in $I(\Gamma)$ are of the form $pp^*$ for some directed path. Since $I(\Gamma)$ is an inverse semigroup, its set of idempotents $E(I(\Gamma))$ form a semilattice under the natural partial 
ordering $e\,\leq\, f\,\Leftrightarrow\, e\,=\,ef\,=fe$. For $pp^*,\,qq^* \in E(I(\Gamma))$, if 
$p\,=\,qt$ for some directed path $t$, then 
$pp^*qq^*\,=\,p(qt)^*qq^*\,=\,p(qt)^*\,=\,pp^*$ which implies that $pp^*\,\leq\,qq^*$.

Conversely  $pp^*\,\leq\, qq^*$ if and only if $q$ is an initial component of $p$ (cf.\cite{meakin}). But $ 0\cdot pp^*\,=\, 0\, \forall \,pp^*\, \in E(I(\Gamma)),\,ie., 0$ is the minimum element in $E(I(\Gamma))$. Thus the 
maximal elements in $E(I(\Gamma))$ are $pp^*$ where $p$ is a directed path, for which there doesnot exist a path $q$ as its initial component, which is possible only if $p$ is the trivial path $v$ for some vertex $v$ in $\Gamma$. Thus maximal idempotents in $I(\Gamma)$ are precisely the vertices of $\Gamma$.

\begin{exam}
Consider \[
\begin{tikzpicture}
\node (mid)        {v};
\path[->]
   (mid)  edge [loop above]            node                      {e} (mid)  ;
\end{tikzpicture}
\]
then $I(\Gamma)$ is bicyclic semigroup $<e,\,e^*\mid e^*e\, =\,v>$ and 
$E(I(\Gamma))\,=\,\{0,v,e^n(e^n)^*\mid \,n\,\in \mathbb{N}\}$ maximum and minimum elements are $v$ and $0$  respectively. Further \\
\[
\begin{array}{lcl}
\mbox{$e^n(e^n)^*\,\leq\,e^m(e^m)^*$} & iff & e^m\, is\,an\,initial\,component \,of e^n\\
\mbox{} & iff & m\,\leq\,n\quad\forall\,m,n\,\in\mathbb{N}
\end{array}
\]
thus the lattice diagram of the semilattice $E(I(\Gamma))$ is an infinite chain having $v$ as the top most element and 0 on the bottom.
\[\begin{tikzpicture}[scale=.7]
  \node (one) at (0,2) {$v$};
  \node (a) at (0,0) {$ee^*$};
  \node (b) at (0,-2) {$e^2e^{2*}$};
  \node (c) at (0,-4) {$e^{m-1}e^{m-1*}$};
  \node (d) at (0,-6) {$e^me^{m*}$};
  \node (zero) at (0,-8) {$0$};
 \draw (d) -- (c) ;
\draw (b) -- (a) -- (one) ;
\draw[dotted] (c)--(b);
\draw[dotted] (zero)--(d);
\end{tikzpicture}
\]
\end{exam}

\begin{exam}
Consider $P_2$\[
\begin{tikzpicture}[>=stealth,
   shorten >=1pt,
   node distance=2cm,
   on grid,
   auto,
   every state/.style={draw=black!60, fill=black!5, very thick}
  ]
\node (mid)        {u};
\node (right)[right=of mid] {v};
\path[->]
   (mid)  edge             node                      {e} (right)
   ;
\end{tikzpicture}
\]
then $I(P_2)\,=\,\{u,\,v,\,e,\,e^*,\,ee^*,0\}$ and $E(I(P_2))\,=\,\{o,u,v,ee^*\}$ and the lattice diagram of $E(I(P_2))$ is given by,\\
\[\begin{tikzpicture}[scale=.7]
  \node (u) at (0,2) {$u$};
  \node (v) at (2,2) {$v$};
  \node (b) at (0,0) {$ee^*$};
  \node (zero) at (1,-2) {$0$};
 \draw (u) -- (b) ;
\draw (b) -- (zero) ;
\draw (v)--(zero);
\end{tikzpicture}
\]
\end{exam}

\begin{exam}
Consider $P_3$
\[
\begin{tikzpicture}[>=stealth,
   shorten >=1pt,
   node distance=2cm,
   on grid,
   auto,
   every state/.style={draw=black!60, fill=black!5, very thick}
  ]
\node (a) {$v_1$};
\node (b)[right=of a] {$v_2$};
\node (c)[right= of b] {$v_3$};
\path[->]
(a) edge node {$e_1$} (b)
(b) edge node {$e_2$} (c);
\end{tikzpicture}
\]
here
\[
\begin{split}
I(P_3)=\{v_1,v_2,v_3,e_1,e_2,{e_1}^*,{e_2}^*,{e_1}{e_1}^*,{e_2}{e_2}^*,e_1e_2,{e_1e_2}^*,{e_1e_2}(e_1e_2)^*,\\
e_2(e_1e_2)^*,e_1e_2{e_2}^*,0\}h
\end{split}
\]
$E(I(P_3))=\{0,v_1,v_2,v_3,e_1{e_1}^*,e_2{e_2}^*,(e_1e_2)(e_1e_2)^*\}$. 
Here the lattice diagram of $E(I(P_3))$ is as follows,\\
\[\begin{tikzpicture}[scale=.7]
  \node (u) at (0,2) {$v_1$};
  \node (v) at (2,2) {$v_2$};
  \node (w) at (4,2) {$v_3$};
\node (a) at (0,0) {$e_1{e_1}^*$};
\node (b) at (2,0) {$e_2{e_2}^*$};
\node (c) at (0,-2){$(e_1e_2)(e_1e_2)^*$};
  \node (zero) at (2,-4) {$0$};
 \draw (u) -- (a) ;
\draw (v) -- (b) ;
\draw (a)--(c);
\draw (c)--(zero);
\draw (b)--(zero);
\draw (w)--(zero);
\end{tikzpicture}
\]
\end{exam}

Thus we have for directed paths $P_2$ and $P_3$  lattice diagram has non-intersecting chains from each vertex ending at $0$. The chain at $v_i$  contains only idempotents $pp^*$ where $p$ is a path starting at $v_i$ and are arranged in ascending order of their length. Hence minimal nonzero idempotents are $pp^*$ where $p$ is the longest path starting from a vertex. In general for $P_n$,\\
\[
\begin{tikzpicture}[>=stealth,
   shorten >=1pt,
   node distance=2cm,
   on grid,
   auto,
   every state/.style={draw=black!60, fill=black!5, very thick}
  ]
\node (a) {$v_1$};
\node (b)[right=of a] {$v_2$};
\node (c)[right= of b] {$v_3$};
\node (d)[right= of c] {$v_{n-2}$};
\node (e)[right= of d] {$v_{n-1}$};
\node (f)[right= of e] {$v_n$};
\draw[dotted] (c)--(d);
\path[->]
(a) edge node {$e_1$} (b)
(b) edge node {$e_2$} (c)
(d) edge node {$e_{n-1}$} (e)
(e) edge node {$e_{n-2}$} (f);
\end{tikzpicture}
\]
the lattice diagram is given by,\\
\[\begin{tikzpicture}[scale=.7]
  \node (u) at (-6,2) {$v_1$};
  \node (v) at (-1,2) {$v_2$};
\node (x) at (4,2) {$v_{n-2}$};
\node (y) at (6,2) {$v_{n-1}$};
\node (z) at (8,2) {$v_n$};
\node (a) at (-6,0) {$e_1{e_1}^*$};
\node (b) at (-1,0) {$e_2{e_2}^*$};
\node (aa) at (-6,-2){$(e_1e_2)(e_1e_2)^*$};
\node (aaa) at (-6,-5) {$(e_1...e_{n})(e_1...e_{n})^*$};
\node (bb) at (-1,-2) {$(e_2e_3)(e_2e_3)^*$};
\node (bbb) at (-1,-5) {$(e_2...e_{n})(e_2...e_{n})^*$};
\node (xx) at (4,0) {$e_{n-1}{e_{n-1}}^*$};
\node (xxx) at (4,-2) {$(e_{n-1}e_n)(e_{n-1}e_n)^*$};
\node (yy) at (6,0){$e_n{e_n}^*$};
  \node (zero) at (3.5,-8) {$0$};
\draw[dotted](v)--(x);
\draw[dotted](b)--(xx);
 \draw (u) -- (a) ;
\draw (v) -- (b) ;
\draw (a)--(aa);
\draw (b)--(bb);
\draw (x)--(xx);
\draw (xx)--(xxx);
\draw (xxx)--(zero);
\draw (y)--(yy);
\draw[dotted](aa)--(aaa);
\draw[dotted](bb)--(bbb);
\draw(aaa)--(zero);
\draw (bbb)--(zero);
\draw (yy)--(zero);
\draw (z)--(zero);
\end{tikzpicture}
\]
\begin{dfn}(cf. \cite{lawson})
A non-zero idempotent of a semigroup with zero is said to be primitive if it is minimal relative to the natural partial order on the set of non-zero idempotents.
\end{dfn}

 An inverse semigroup with zero is said to be primitive if every nonzero idempotent is primitive. 

\begin{thm}
A graph inverse semigroup $I(\Gamma)$ is primitive if and only if $\Gamma$ is totally a disconnected or a trivial  graph. 
\end{thm}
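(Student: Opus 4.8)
The plan is to prove both directions by direct appeal to the description of the natural partial order on nonzero idempotents already established above, namely that the nonzero idempotents of $I(\Gamma)$ are exactly the elements $pp^*$ with $p$ a directed path, and that $pp^* \leq qq^*$ holds precisely when $q$ is an initial component of $p$ (that is, $p = qt$ for some directed path $t$). Note first that both hypotheses ``totally disconnected'' and ``trivial'' amount to the single condition $\Gamma^1 = \emptyset$, so the statement reduces to showing that $I(\Gamma)$ is primitive if and only if $\Gamma$ has no edges.

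For the ``only if'' direction I would argue by contrapositive: assuming $\Gamma^1 \neq \emptyset$, I would exhibit a nonzero idempotent that fails to be minimal. Choose any edge $e \in \Gamma^1$ and set $v = s(e)$. Then $v = vv^*$ is a nonzero idempotent (the trivial path at $v$), and by relation $(1)$ we have $ve = s(e)e = e$, so $v$ is an initial component of the path $e$ and hence $ee^* \leq vv^* = v$. Since $e$ is a nontrivial path, $ee^* \neq v$, so in fact $ee^* < v$. Thus the vertex $v$ is a nonzero idempotent that is not minimal among nonzero idempotents, i.e.\ not primitive, and therefore $I(\Gamma)$ is not a primitive inverse semigroup.

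For the ``if'' direction, suppose $\Gamma^1 = \emptyset$. Then the only directed paths in $\Gamma$ are the trivial paths, so the only nonzero idempotents of $I(\Gamma)$ are the elements $vv^* = v$ for $v \in \Gamma^0$. I would then verify minimality of each such $v$: if $vv^* \leq ww^*$ for some $w \in \Gamma^0$, then $w$ must be an initial component of the trivial path $v$, which forces $w = v$. Hence no nonzero idempotent lies strictly below $v$, every nonzero idempotent is primitive, and $I(\Gamma)$ is primitive.

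The argument is essentially a one-line application of the order characterization in each direction, so I do not anticipate a genuine obstacle; the only point meriting explicit care is the boundary case of a single edge $e$, where the relevant initial component is the \emph{trivial} path $s(e)$ rather than a shorter nontrivial path. Making this step explicit is what shows that even one edge already destroys primitivity, and thus clarifies why the correct dividing line is ``no edges'' rather than the weaker ``no paths of length $\geq 2$''.
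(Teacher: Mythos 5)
Your proposal is correct and follows essentially the same route as the paper: both directions rest on the characterization $pp^*\leq qq^*$ iff $q$ is an initial component of $p$, with the key observation that an edge $e$ out of $v$ yields $ee^*< v$, destroying minimality of $v$, and that in the edgeless case the only nonzero idempotents are the pairwise incomparable vertices. Your version is in fact slightly more careful than the paper's, since you explicitly note $ee^*\neq v$ to get strict inequality and you verify minimality of each vertex in the converse rather than merely asserting incomparability.
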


\begin{proof}
Suppose $I(\Gamma)$ is a primitive inverse semigroup. Since maximal idempotents of $I(\Gamma)$ are vertices of 
$\Gamma$ and among non-zero idempotents minimal elements are ${p_i}{p_i}^*$, where $p_i$ is a longest path in $\Gamma$ starting from some vertex $v_i$. If $e$ is an edge starting from $v$ then $ee^*\,\leq \,v$, hence $v$ cannot be a minimal element and so a vertex $v$ is minimal among the set of non-zero idempotents if and only if there exist no edge $e$ starting from $v$. Since each non-zero idempotents are primitive it is not possible to have an edge $e$ starting from any vertex of $\Gamma$. Therefore $\Gamma$ does not have any edge, ie., $\Gamma$ is either totally disconnected or trivial.\\
Conversely, assume that $\Gamma$ is totally disconnected or trivial. Then none of the vertices in $\Gamma$ are comparable under the natural partial order. Hence $I(\Gamma)$ is a primitive inverse semigroup.
\end{proof}

\subsection{Local submonoid}
The local submonoid $pp^*I(\Gamma)pp^*$ is an inverse submonoid of $I(\Gamma)$ having identity $pp^*$. General form of an element in $pp^*I(\Gamma)pp^*$ is given by,\\
 \[\begin{array}{lcl}
\mbox{$(pp^*)(rs^*)(pp^*)$} & = &{\left\{ \begin{array}{ll}
          pp^*(p\mu)(p\nu)^*pp^* & \mbox{if $\exists\,\mu,\nu\,\ni\, r=p\mu$ and $s=p\nu$}\\
          0  & \mbox{otherwise};
          \end{array} \right. }\\
\mbox{} & = &{\left\{ \begin{array}{ll}
          (p\mu)(p\nu)^* & \mbox{if $\exists\,\mu,\nu\,\ni\, r=p\mu$ and $s=p\nu$}\\
          0  & \mbox{otherwise};
          \end{array} \right. }
\end{array} 
\]
hence,\\
\[
\begin{split} pp^*I(\Gamma)pp^*\,=\bigg\{0,(p\mu)(p\nu)^*\mid\mu\,\text{and}\,\nu\,\text{are directed paths 
with}\,r(\mu)=r(\nu)\\ and\, s(\mu)=s(\nu)=r(p)\,\bigg\}\end{split}
\]
if $pp^*\,=\,v$ for some vertex $v$, then,\\
\[
 vI(\Gamma)v\,=\bigg\{0,\mu{\nu}^*\mid \,r(\mu)=r(\nu) and\, s(\mu)=s(\nu)=v\,\bigg\}
\]
ie., the local submonoid $vI(\Gamma)v$ contains all circutes at $v$ of the form $pq^*$, where $p$ and $q$ are directed paths starting at $v$ and ending at the same vertex. If the choosen vertex $v$ is a sink, then 
the trivial path $v$ is the only path starting from $v$ and so $vI(\Gamma)v\,=\,\{0,\,v\} $.

Conversely let $ vI(\Gamma)v\,=\,\{0,\,v\} $. If there exists an edge $e$ starting from $v$, then $ee^*$ will be in $ vI(\Gamma)v$. But, $ vI(\Gamma)v$ is trivial and so it is not possible to have an edge starting from $v$. Hence the local submonoid at $v$ is trivial if and only if $v$ is a sink. For a totally disconnected graph $\Gamma$ each vertex is a sink and so $vI(\Gamma)v$ is trivial for every $v\,\in\Gamma^0$. Hence a directed graph $\Gamma$ is totally disconnected if and only if local submonoid of $I(\Gamma)$ at each  vertex $v$  is trivial. These observation can be formulated as follows,
\begin{thm} Let $\Gamma$ be a directed graph and $I(\Gamma)$ the graph inverse semigroup. Then the local submonoid $vI(\Gamma)v$ for a vertex $v$ of $\Gamma$ is trivial if and only if $v$ is a sink.
\end{thm}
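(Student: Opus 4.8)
The plan is to read off the statement directly from the explicit description of the local submonoid $vI(\Gamma)v$ computed just above, namely
\[
vI(\Gamma)v = \bigl\{0,\ \mu\nu^* \mid r(\mu)=r(\nu)\text{ and }s(\mu)=s(\nu)=v\bigr\},
\]
so that both directions reduce to controlling the directed paths that start at $v$.

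For the implication ``$v$ is a sink $\Rightarrow vI(\Gamma)v$ is trivial'', I would first note that a sink $v$ has outdegree zero, i.e. $s^{-1}(v)=\emptyset$, so no edge of $\Gamma$ has source $v$. Hence the only directed path with source $v$ is the trivial path $v$ itself: a nontrivial path $\mu=e_1e_2\cdots e_n$ with $s(\mu)=v$ would force $s(e_1)=v$, contradicting that $v$ emits no edge. Substituting $\mu=\nu=v$ into the description above gives $\mu\nu^*=vv^*=v$, and this is the only nonzero element available, so $vI(\Gamma)v=\{0,v\}$ is trivial.

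For the converse I would argue by contrapositive. Suppose $v$ is not a sink, so there is an edge $e\in\Gamma^1$ with $s(e)=v$. Taking $\mu=\nu=e$ gives $s(\mu)=s(\nu)=s(e)=v$ and $r(\mu)=r(\nu)=r(e)$, so $ee^*$ lies in $vI(\Gamma)v$. It then remains to check that $ee^*$ is distinct from both $0$ and $v$. Since $v$ is an initial component of the path $e$ (indeed $e=ve$), the ordering of idempotents recalled earlier yields $ee^*\le vv^*=v$; and because $e$ is a nontrivial path we have $ee^*\neq v$, so the inequality is strict. Thus $vI(\Gamma)v$ contains the three distinct elements $0$, $v$ and $ee^*$ and is not trivial.

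The step carrying the real content is the verification in the converse that $ee^*$ is a genuinely new element, that is $ee^*\notin\{0,v\}$; everything else is bookkeeping on sources and ranges of paths. This hinges on the strictness $ee^*<v$, which follows from the characterisation $pp^*\le qq^*$ iff $q$ is an initial component of $p$, together with $e\neq v$.
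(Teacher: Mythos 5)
Your proposal is correct and follows essentially the same route as the paper: both directions are read off from the explicit description of $vI(\Gamma)v$ as $\{0,\mu\nu^*\mid s(\mu)=s(\nu)=v,\ r(\mu)=r(\nu)\}$, with the converse handled by exhibiting $ee^*$ for an edge $e$ emitted by $v$. Your only addition is to make explicit why $ee^*\notin\{0,v\}$ (via the strict inequality $ee^*<v$), a point the paper leaves implicit.
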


\subsection{Green's relations on $I(\Gamma)$}

\begin{enumerate}
\item $pq^* \,\mathscr{L}\, xy^*\,$  if and only if  $ \, q\,=\,y$\\
For elements $s,t$ in an inverse semigroup, $s\,\mathscr{L}\, t\,\Leftrightarrow \, s^{-1}s\,=\,t^{-1}t$. So,
\[\begin{array}{lcl}
\mbox{$pq^* \,\mathscr{L}\, xy^*$} & \Leftrightarrow & (pq^*)^*(pq^*)\,=\,(xy^*)^*(xy^*)\\
\mbox{} & \Leftrightarrow & qp^*pq*\,=\,yx^*xy^*\\
\mbox{} & \Leftrightarrow & qq^*\,=\, yy^*\\
\mbox{} & \Leftrightarrow & q\,=\,y
\end{array}\]

\item  $pq^* \,\mathscr{R}\, xy^*\,$  if and only if  $ \, p\,=\,x$\\
For elements $s,t$ in an inverse semigroup, $s\,\mathscr{R}\, t\,\Leftrightarrow \,s s^{-1}\,=\,tt^{-1}$. Therefore,
\[\begin{array}{lcl}
\mbox{$pq^* \,\mathscr{R}\, xy^*$} & \Leftrightarrow & (pq^*)(pq^*)^*\,=\,(xy^*)(xy^*)^*\\
\mbox{} & \Leftrightarrow & pq*qp^*\,=\,xy^*yx^*\\
\mbox{} & \Leftrightarrow & pp^*\,=\, xx^*\\
\mbox{} & \Leftrightarrow & p\,=\,x
\end{array}\]

\item  For elements $pq^*,\,xy^*$ in $I(\Gamma),\,\,pq^* \,\mathscr{D}\, xy^*$ if and only if $r(p)\,=\,r(x)$\\
\[\begin{array}{lcl}
\mbox{$pq^* \,\mathscr{D}\, xy^*$} & \Leftrightarrow & \exists\, rs^*\,\in\, I(\Gamma) \ni pq^*\mathscr{L}rs^*\mathscr{R}xy^*\\
\mbox{} & \Leftrightarrow & s\,=\,q \, and \,r\,=\, x\\
\mbox{} & \Leftrightarrow & r(p)\,=\,r(x)\\
\end{array}\]

\item $pq^*\mathscr{J}xy^* $  if and only if  $\, r(p)$ and $r(x)$ are in the same strongly connected component of $\Gamma$.

If $pq^*\mathscr{J}xy^*$ then $pq^*\,\in\, I(\Gamma)xy^*I(\Gamma)$ and 
$$xy^*\,=\,xx^*xy^*\,=\, xr(x)y^*\,\in\, I(\Gamma)r(x)I(\Gamma).$$ 
Also, $r(x)\,=\,r(x)r(x)\,=\, xx^*yy^*\,=\,x(xy^*)y^*\,\in\, I(\Gamma)xy^*I(\Gamma)$ and hence $ I(\Gamma)xy^*I(\Gamma)\,=\, I(\Gamma)r(x)I(\Gamma)$. ie., 
$$pq^*\,\in  I(\Gamma)pq^*I(\Gamma) \,\subseteq  I(\Gamma)r(x)I(\Gamma).$$
Thus $r(p)\,=p^*(pq^*)q\,\in I(\Gamma)r(x)I(\Gamma)$ which implies $r(p)\,=\,p_1{q_1}^*r(x)p_2{q_2}^*$ 
where $s(q_1)=r(x)=s(p_2),\, r(p_1)=r(q_1)$ and $r(p_2)=r(q_2)$. ie., $r(p)\,=\, p_1{q_1}^*p_2{q_2}^*$ and 
$r(p)$ is a vertex in $\Gamma$. Hence $p_1,q_2\,\in \Gamma^0$ and $q_1\,=\, p_2$. 
Therefore $p_1\,=\,q_2\,=\,r(p)$ and $q_1$ is a directed path from $r(p)$ to $r(x)$. Using same arguments we get a directed path from $r(x)$ to $r(p)$. Hence $r(p)$ and $r(x)$ are in the same strongly connected component

Conversely, if $r(x)$ and $r(p)$ are in the same strongly connected component, then there exists directed paths 
$t\, and\, t'$ from $r(x)$ to $r(p)$ and $r(p)$ to $r(x)$ respectively. Then $r(p)= t^*t=t^*r(x)t\,\in  I(\Gamma)r(x)I(\Gamma)=I(\Gamma)xy^*I(\Gamma)$ hence $pq^*\,=\,pp^*pq^*\,=\,pr(p)q^*\,\in\,I(\Gamma)xy^*I(\Gamma)$ and so 
$I(\Gamma)pq^*I(\Gamma)\, \subseteq \,I(\Gamma)xy^*I(\Gamma)$. Similarly we get $I(\Gamma)xy^*I(\Gamma)\, \subseteq\,I(\Gamma)pq^*I(\Gamma)$. Hence $I(\Gamma)xy^*I(\Gamma)\,=\,I(\Gamma)pq^*I(\Gamma)$ ie., $ pq^*\mathscr{J}xy^*$.

\item $pq^*\mathscr{H}xy^*\,$  if and only if  $pq^*\,=\, xy^*$\\
\[
\begin{array}{lcl}
\mbox{$pq^*\mathscr{H}xy^*$} & \Leftrightarrow & pq^* \,\mathscr{L}\, xy^* \,and\,pq^* \,\mathscr{R}\, xy^*\\
\mbox{} & \Leftrightarrow & q\,=\,y \, and\, p\,=\,x\\
\mbox{} & \Leftrightarrow &  pq^*\,=\, xy^*
\end{array}
\]
\end{enumerate}
ie., $\mathscr{H}\,=\,\mathcal{I}_{I(\Gamma)}$. Hence for any directed graph $\Gamma$, $I(\Gamma)$  is combinatorial.

\begin{exam}
Consider $P_2$\\
\[
\begin{tikzpicture}[>=stealth,
   shorten >=1pt,
   node distance=2cm,
   on grid,
   auto,
   every state/.style={draw=black!60, fill=black!5, very thick}
 ]
\node (mid)        {u};
\node (right)[right=of mid] {v};
\path[->]
   (mid)  edge             node                      {e} (right)
   ;
\end{tikzpicture}
\]
then, $I(P_2)\,=\,\{u,v,e,e^*,ee^*,0\}$ and the Green's relations are\\
$\mathscr{L}\,=\,\{(v,e),(e,v),(e^*,ee^*),(ee^*,e^*),\}\cup\mathcal{I}_{I(P_2)}$\\
$\mathscr{R}\,=\,\{(u,e^*),(e^*,u),(e,ee^*),(ee^*,e)\}\cup\mathcal{I}_{p_2}$\\
$\mathscr{H}\,=\,\mathcal{I}_{P_2}$\\
\[\begin{split}\mathscr{D}\,=\,\{(v,e),(e,v),(v,e^*),(e^*,v),(v,ee^*),(ee^*,v),(e,e^*),(e^*,e),(e,ee^*),\\(ee^*,e),(e^*,ee^*),(ee^*,e^*)\}\cup\mathcal{I}_{I(P_2)}
\end{split}
\]
\[\begin{split}\mathscr{J}\,=\,\{(v,e),(e,v),(v,e^*),(e^*,v),(v,ee^*),(ee^*,v),(e,e^*),(e^*,e),(e,ee^*),\\(ee^*,e),(e^*,ee^*),(ee^*,e^*)\}\cup\mathcal{I}_{I(P_2)}
\end{split}
\]
\end{exam}


\begin{thebibliography}{15}
\bibitem{meakin}  John Meakin and Zhengpan Wang,\textit{Graph Inverse Semigroups and Leavitt Path Algebras}, preprint.$(arXiv:1911.00590v2)$.
\bibitem{alan} Alan L.T.Peterson,\textit{Graph inverse semigroups, groupoids and $C^*$-algebras},Journal of Operator Theory,Vol. $48,$ No. $3(2002)\,645-662$
\bibitem{mesyan} Zachary Mesyan and J.D.Mitchell,\textit{The Structure of a Graph Inverse Semigroup}, preprint.$(arXiv:1409.4380v2)$.
\bibitem{jones} D.G.Jones and M.V.Lawson, \textit{Graph inverse semigroups: Their characterization andcompletion}, J. Algebra $409\, (2014) \,444–473$.
\bibitem{lawson} Mark V.Lawson$(1998)$,\textit{Inverse semigroups: The theory of partial symmetries}, World Scientific,Singapore-NewJersey-London-Hong Kong,ISBN$981-02-3316-7$.
\bibitem{howie} J.M.Howie$(1976)$ \textit{An introduction to semigroup theory}, Academic Press inc(London) Ltd, London. ISBN $75-46333$.
\bibitem{west} Douglas B.West$(1996)$\textit{Introduction to Graph Theory},Prentice-Hall inc, New Jersey,USA.ISBN: $81-203-1480-8$
\end{thebibliography}
\end{document}